\documentclass[final,twoside, a4paper]{amsart}
\usepackage[english]{babel}
\usepackage{amsmath, amsfonts,amssymb,amsopn,amscd,amsthm,mathrsfs}
\usepackage{bm}
\linespread{1.04}			
\usepackage{graphicx}
 \usepackage{enumerate}
 \usepackage{array}
 \usepackage{hyperref}
 \hypersetup{colorlinks=true,linkcolor=blue,citecolor=blue,linktoc=page}
\usepackage{showkeys}
\numberwithin{equation}{section}
\usepackage{fancyhdr}
\fancyhf{}


\newcommand{\eq}{\begin{equation}}
\newcommand{\qe}{\end{equation}}

\newcommand{\C}{\mathcal{C}}

\newcommand{\E}{\mathbb{E}}
\newcommand{\N}{\mathbb{N}}
\newcommand{\R}{\mathbb{R}}
\newcommand{\p}{\mathbb{P}}
\theoremstyle{plain}

\newtheorem{thm}{Theorem}
\newtheorem{lem}[thm]{Lemma}
\newtheorem{prop}[thm]{Proposition}
\newtheorem{cor}[thm]{Corollary}

\theoremstyle{definition}

\theoremstyle{remark}
\newtheorem*{rem}{Remark}


\usepackage{color}


\begin{document}
\sloppy
\pagestyle{headings} 
\title[Superconcentration of Gaussian stationary processes]{Some superconcentration inequalities for extrema of stationary Gaussian processes}
\date{Note of \today}
\keywords{Superconcentration, concentration inequalities, stationary Gaussian process,
hypercontractivity, theory of extremes}
\author{Kevin Tanguy \\ University of Toulouse, France}
\address{Kevin Tanguy is with the Institute of Mathematics of Toulouse (CNRS UMR 5219). Universit\'e Paul Sabatier 31062 Toulouse, France.}
\email{ktanguy@math.univ-toulouse.fr}
\urladdr{http://perso.math.univ-toulouse.fr/ktanguy/}

\begin{abstract}
This note is concerned with concentration inequalities for extrema of stationary Gaussian processes.
It provides non-asymptotic tail inequalities which fully reflect the fluctuation rate, and as such improve upon standard Gaussian concentration. The arguments rely on the hypercontractive approach developed by Chatterjee for superconcentration variance bounds. Some statistical illustrations complete the exposition. 
\end{abstract}
\maketitle 
\section{Introduction}

\subsection{Convergence of extremes}

As an introduction, we recall some classical facts about
weak convergence of extrema of stationary Gaussian sequences and processes.

\subsubsection{Stationary Gaussian sequences}

Let $(X_i)_{i\geq 0}$ be a centered stationary Gaussian sequence such that
$\mathbb{E}\left[ X_i^2\right]=1$, $i\geq 0$, with covariance $\mathrm{Cov}(X_i,X_j)=\phi(|i-j|),\,i,j\geq 0$, where $\phi\,:\,\N\to\R$. An extensive study has been developed in this setting towards the asymptotic behaviour of the maximum $M_n=\max_{i=1,\ldots,n}X_i$ (Berman, Mittal, Pickands... cf. e.g.~\cite{Lead}). A sample result is the following theorem (see \cite{Lead}).

 \begin{thm}\label{CVd}
 Let $(X_i)_{i \geq 0}$ be a stationary Gaussian sequence with covariance
 function $\phi$ such that $\phi(n)\log n\to 0$ as $n\to\infty$. Then
 $$
 a_n(M_n-b_n)\underset{n\rightarrow\infty}{\longrightarrow} G
 $$
 in distribution where $a_n=(2\log n)^{1/2}$,
 $$
 b_n=(2\log n)^{1/2}-\frac{1}{2}(2\log n)^{-1/2}(\log\log n+\log 4\pi)
 $$
 and the random variable $G$ has a Gumbel distribution (for all $x\in\R$,
 $\p(G\leq x)=e^{-e^{-x}})$.
 \end{thm}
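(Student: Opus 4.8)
The plan is to reduce the weakly dependent case to the independent one through a Gaussian comparison argument in the spirit of Berman. First I would establish the benchmark for an i.i.d. sequence $(\xi_i)_{i\ge 1}$ of standard Gaussians. Writing $\Phi$ for the standard Gaussian distribution function, $\varphi$ for its density, and $u_n(x)=b_n+x/a_n$, the Mills-ratio asymptotics $1-\Phi(u)\sim \varphi(u)/u$ as $u\to\infty$ together with the explicit form of $a_n,b_n$ give $n\bigl(1-\Phi(u_n(x))\bigr)\to e^{-x}$ for every $x\in\R$. Consequently
\[
\p\Bigl(\max_{1\le i\le n}\xi_i\le u_n(x)\Bigr)=\Phi\bigl(u_n(x)\bigr)^n=\Bigl(1-\tfrac{1}{n}\,n\bigl(1-\Phi(u_n(x))\bigr)\Bigr)^n\longrightarrow e^{-e^{-x}},
\]
which is the Gumbel limit in the independent model.

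Second, I would compare the law of $M_n$ with that of $\max_{i\le n}\xi_i$ by means of the normal comparison lemma (Berman's inequality): for the standardized vectors $(X_1,\dots,X_n)$ and $(\xi_1,\dots,\xi_n)$, evaluated at the common level $u_n=u_n(x)$,
\[
\bigl|\p(M_n\le u_n)-\Phi(u_n)^n\bigr|\le \frac{1}{2\pi}\sum_{1\le i<j\le n}\frac{|\phi(j-i)|}{\sqrt{1-\phi(j-i)^2}}\,\exp\Bigl(-\frac{u_n^2}{1+|\phi(j-i)|}\Bigr).
\]
Using stationarity to group the $(n-k)$ pairs at lag $k$, and noting that $\beta:=\sup_{k\ge1}|\phi(k)|<1$ (by nondegeneracy and $\phi(k)\to0$) bounds the prefactor $(1-\phi(j-i)^2)^{-1/2}$, the right-hand side is at most $C\,\Sigma_n$ with
\[
\Sigma_n=n\sum_{k=1}^{n-1}|\phi(k)|\,\exp\Bigl(-\frac{u_n^2}{1+|\phi(k)|}\Bigr).
\]
It then suffices to prove $\Sigma_n\to0$, since combined with the first step this yields $\p(M_n\le u_n(x))\to e^{-e^{-x}}$, i.e. the convergence in distribution of $a_n(M_n-b_n)$ to $G$.

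The heart of the argument, and the step I expect to be the main obstacle, is the estimate $\Sigma_n\to0$; I would obtain it by a two-scale split of the lag sum using $u_n^2=2\log n+O(\log\log n)$. Fix a small $\gamma>0$ and cut at $k_0=\lfloor n^{\gamma}\rfloor$. For the short lags $1\le k\le k_0$ I would use $|\phi(k)|\le\beta$ and $\exp(-u_n^2/(1+|\phi(k)|))\le\exp(-u_n^2/(1+\beta))$, so that this block is $\lesssim k_0\,n^{1-2/(1+\beta)}=k_0\,n^{-(1-\beta)/(1+\beta)}$, which tends to $0$ once $\gamma<(1-\beta)/(1+\beta)$. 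For the long lags $k>k_0$ the Berman hypothesis $\phi(k)\log k\to0$ is decisive: since $\log k\ge\gamma\log n$ there, it forces $|\phi(k)|\,u_n^2\to0$ uniformly, hence $\exp(-u_n^2/(1+|\phi(k)|))\le(1+o(1))e^{-u_n^2}$; combining $e^{-u_n^2}=O(n^{-2}\log n)$ with $\sum_{k>k_0}|\phi(k)|=o(n/\log n)$ shows this block is $o(1)$ as well. Balancing the two regimes — in particular checking that $\beta<1$ so the short lags contribute a genuine negative power of $n$, while extracting $|\phi(k)|u_n^2\to0$ from the logarithmic decay of $\phi$ for the long lags — is where the weak-dependence condition is used in full, and is the delicate part of the proof.
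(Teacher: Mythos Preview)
The paper does not supply its own proof of this theorem: it is quoted as classical background from Leadbetter--Lindgren--Rootz\'en (the reference \cite{Lead}), and the paper's original contributions are the superconcentration Theorems~\ref{GS}, \ref{GP} and \ref{THm}. So there is no in-paper proof to compare against.

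That said, your outline is exactly the classical Berman argument that appears in \cite{Lead}: establish the i.i.d.\ benchmark via the Mills ratio, then transfer it to the dependent sequence by the normal comparison lemma, and show the comparison error $\Sigma_n\to0$ by splitting the lag sum at a threshold $k_0\asymp n^{\gamma}$. Your short-lag and long-lag estimates are correct as written; in particular the identification $e^{-u_n^2}=O(n^{-2}\log n)$ and the bound $\sum_{k>k_0}|\phi(k)|=o(n/\log n)$ (obtained from $|\phi(k)|\log k\to0$ together with $\log k\ge\gamma\log n$) are the right ingredients. One small point to make explicit: you invoke $\beta=\sup_{k\ge1}|\phi(k)|<1$ ``by nondegeneracy''; this is indeed a standing hypothesis in \cite{Lead}, but the Berman condition $\phi(n)\log n\to0$ alone does not force $|\phi(k)|<1$ for \emph{small} $k$, so you should state it as an assumption (as the reference does) rather than derive it.
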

 
\subsubsection{Stationary Gaussian processes}

Let $(X_t)_{t\geq 0}$ be a centered stationary Gaussian
process such that $\E\left[X_t^2\right]=1,\, t\geq 0$, with covariance function $\mathrm{Cov}(X_s,X_t)=\phi\big(|t-s|\big)$, for $t,s\geq 0$, where  $\phi\,:\,[ 0,+\infty)\to\R$. Consider two behaviors, as $t\to 0$, of the covariance function $\phi$ :
 \begin{eqnarray}
 \phi(t)&=&1-\frac{\lambda_2t^2}{2}+o(t^2)\label{1.1}\\
  \phi(t)&=&1-C|t|^\alpha+o\big(|t|^{\alpha}\big)\label{1.2}
 \end{eqnarray}

The first case ensures that $X_t$ is differentiable and $\lambda_2=-\phi''(0)$ is a spectral moment, whereas the second case concerns non-differentiable (but continuous) processes (such as the Ornstein-Uhlenbeck process). For more details about this topic, see \cite{Lead}. For any $T>0$, set
$M_T=\sup_{t\in[ 0, T]} X_t$.

 \begin{thm}\label{Cvc}
 Let $(X_t)_{t\geq 0}$ be a stationary Gaussian process such that $\phi(t)\log t\to 0$ as $t\to\infty$. Then 
 \begin{equation*}
 a_T(M_T-b_T) \underset{T\to \infty}{\longrightarrow} G
 \end{equation*}
 in distribution where
$a_T=(2\log T)^{1/2}$, $b_T$ depends on the hypothesis \eqref{1.1} or \eqref{1.2} and $G$ has a Gumbel distribution.
  \end{thm}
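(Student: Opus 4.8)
The plan is to reduce the continuous-time statement to the discrete one of Theorem~\ref{CVd} by a discretization argument combined with a Gaussian comparison inequality, following the classical treatment of Gaussian extremes. Fix $x\in\R$ and put $u=u_T=b_T+x/a_T$; it suffices to prove $\p(M_T\le u_T)\to e^{-e^{-x}}$. First I would introduce a grid of mesh $q=q_T$ and the discretized supremum $M_T^{(q)}=\max\{X_{jq}:0\le j\le T/q\}$, and estimate the discretization error $\big|\p(M_T\le u_T)-\p(M_T^{(q)}\le u_T)\big|$, which measures the probability that $X$ exceeds the level $u_T$ strictly between two grid points without exceeding it at a grid point.

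The admissible mesh is dictated by the local behavior of $\phi$, and this is exactly where the two regimes \eqref{1.1} and \eqref{1.2} --- and the two forms of $b_T$ --- enter. Under \eqref{1.1} the process is mean-square differentiable with $\lambda_2=-\phi''(0)<\infty$, and Rice's formula furnishes the finite mean upcrossing intensity $\tfrac{\sqrt{\lambda_2}}{2\pi}e^{-u^2/2}$; the expected number of upcrossings of $u_T$ inside a single cell is then proportional to $q$, so a mesh $q_T\to 0$ makes the discretization error vanish while the local constant $\sqrt{\lambda_2}/(2\pi)$ is absorbed into the centering $b_T$. Under \eqref{1.2} the process is only continuous (the Ornstein--Uhlenbeck case being typical), there is no finite upcrossing intensity, and exceedances cluster at the local scale $|t|^{\alpha}$; here one must invoke Pickands' double-sum method to identify the local constant entering $b_T$ and to check that the continuous and discretized maxima have the same limit.

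Having replaced $M_T$ by $M_T^{(q)}$, I would apply the normal comparison (Berman) lemma to the stationary Gaussian sample $(X_{jq})_{0\le j\le n}$, $n=\lfloor T/q\rfloor$, comparing its law with that of $n$ independent standard Gaussians:
\[
\Big|\p\big(M_T^{(q)}\le u\big)-\Phi(u)^{\,n}\Big|\le K\,n\!\!\sum_{1\le j\le n}\abs{\phi(jq)}\exp\!\Big(-\frac{u^2}{1+\abs{\phi(jq)}}\Big).
\]
Splitting the sum into a short-range and a long-range part, the long-range contribution is controlled by the hypothesis $\phi(t)\log t\to 0$ (Berman's condition), which is precisely calibrated against $u_T^2\sim 2\log n$, while the short-range contribution is absorbed by the mesh choice; hence the right-hand side tends to $0$.

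It remains to calibrate the constants so that $n\,(1-\Phi(u_T))\to e^{-x}$, the local constant from the second step being built into $b_T$; then $\Phi(u_T)^{\,n}=\big(1-(1-\Phi(u_T))\big)^{n}\to\exp(-e^{-x})$. Combining the vanishing discretization error, the comparison bound, and this elementary limit yields $\p(M_T\le u_T)\to e^{-e^{-x}}$, the announced Gumbel convergence. I expect the genuine obstacle to lie in the discretization step under \eqref{1.2}: in the absence of a Rice-type bound on local exceedances, controlling the clustering of high values and showing that neither refining nor coarsening the grid alters the limit requires the full strength of Pickands' double-sum technique together with a delicate analysis of the associated local constant.
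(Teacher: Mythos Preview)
The paper does not actually prove Theorem~\ref{Cvc}: it is stated in the introduction as a classical result from extreme value theory, with the implicit reference to \cite{Lead} (Leadbetter, Lindgren and Rootz\'en), and is used only as motivation for the paper's own non-asymptotic concentration inequalities (Theorems~\ref{GS}, \ref{GP}, \ref{THm}). So there is no proof in the paper to compare your proposal against.

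That said, your outline is precisely the classical argument one finds in \cite{Lead}: discretize on a mesh $q_T$, control the discretization error via Rice's formula in the differentiable case \eqref{1.1} or Pickands' double-sum method in the locally H\"older case \eqref{1.2}, then apply the Berman normal comparison lemma under the condition $\phi(t)\log t\to 0$ to reduce to the i.i.d.\ situation. Your identification of the main difficulty (the discretization step under \eqref{1.2}, where no finite upcrossing intensity is available and one must control clustering via Pickands' constants) is accurate. As a sketch of the standard proof this is correct; the details are in Chapters~8 and~12 of \cite{Lead}.
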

  
The aim of this note is to quantify the preceding asymptotic statements into sharp concentration inequalities fully reflecting the fluctuation rate of the maximum (respectively the supremum). Such variance bounds with the correct scale were first obtained
in this context by Chatterjee in \cite{Chatt1},
as part of the superconcentration phenomenon. The results presented here
strengthen the variance bounds into exponential tail inequalities. One main result is the following statement.

\begin{thm}\label{GS}
Let $(X_i)_{i \geq 0}$ be a centered stationary Gaussian sequence with covariance function $\phi$. Assume that $\phi$ is non-increasing and satisfies $\phi(1)<1/2$.
Then, there exists $\alpha=\alpha(\phi)\in(0,1)$ and $c=c(\phi,\alpha) > 0$ such that
for all $n\geq 2$,
\begin{equation}\label{ICd}
\p\left(|M_n-\E[ M_n]|>t\right)\leq 6e^{-ct/\sqrt{\max(\phi(n^\alpha),1/\log n)}},\quad t\geq 0.
\end{equation}
\end{thm}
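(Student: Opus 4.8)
The plan is to reduce the tail estimate to a bound on the growth of the $L^p$ norms of $Y_n := M_n-\E[M_n]$, and to produce that growth from Chatterjee's hypercontractive (interpolation) representation of moments. Writing $\X=(X_1,\dots,X_n)\sim\mathcal N(0,\Sigma)$ with $\Sigma_{ij}=\phi(|i-j|)$, the map $\mathbf x\mapsto\max_i x_i$ is $1$-Lipschitz and differentiable almost everywhere, with $\nabla M_n(\X)=\mathbf e_{I}$, where $I=I(\X)$ is the a.s.\ unique index of the maximal coordinate; in particular $\sum_{i=1}^n\p(X_i=M_n)=1$. Introducing an independent copy $\X'$ and the interpolation $\X_t=e^{-t}\X+\sqrt{1-e^{-2t}}\,\X'$, the Gaussian covariance identity underlying Chatterjee's approach gives
\[
\mathrm{Var}(M_n)=\int_0^\infty e^{-t}\,\E\!\left[\phi\big(|I(\X)-I(\X_t)|\big)\right]dt ,
\]
and, applying the same identity against $g=Y_n^{2q-1}$ (so that $\nabla g=(2q-1)Y_n^{2q-2}\mathbf e_I$), an analogous expression for $\E[Y_n^{2q}]$ in terms of $\E[\phi(|I-I_t|)\,Y_n(\X_t)^{2q-2}]$. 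Everything is thus governed by the expected covariance $\phi(|I(\X)-I(\X_t)|)$ between the location of the maximum of $\X$ and that of its interpolate.

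Next I would estimate this expected covariance by splitting over the distance $|I(\X)-I(\X_t)|$ into three regimes. When $I(\X)=I(\X_t)$ the contribution is the ``return probability'' $\p(I(\X)=I(\X_t))=\sum_i\langle\ind_{\{X_i=M_n\}},P_t\ind_{\{X_i=M_n\}}\rangle$; by hypercontractivity of the Ornstein--Uhlenbeck semigroup there is an exponent $r(t)\in(1,2)$, with $r(t)\uparrow 2$ as $t\downarrow0$, for which $\langle\ind_{\{X_i=M_n\}},P_t\ind_{\{X_j=M_n\}}\rangle\le\p(X_i=M_n)^{1/r(t)}\p(X_j=M_n)^{1/r(t)}$, so the diagonal term is at most $\sum_i\p(X_i=M_n)^{2/r(t)}$. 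Using $\sum_i\p(X_i=M_n)=1$ together with a delocalization estimate $\max_i\p(X_i=M_n)\le n^{-\beta}$ this is $\le n^{-\beta(2/r(t)-1)}$, and $\int_0^\infty e^{-t}n^{-\beta(2/r(t)-1)}dt=O(1/\log n)$. When $|I-I_t|>n^\alpha$ the monotonicity of $\phi$ gives $\phi(|I-I_t|)\le\phi(n^\alpha)$, contributing at most $\phi(n^\alpha)$. The delicate intermediate band $1\le|I(\X)-I(\X_t)|\le n^\alpha$ is bounded, using $\phi(1)<1/2$, by $\phi(1)\,\p\big(1\le|I(\X)-I(\X_t)|\le n^\alpha\big)$, and the remaining task is to control $\int_0^\infty e^{-t}\,\p\big(1\le|I(\X)-I(\X_t)|\le n^\alpha\big)\,dt$ at the scale $\max(\phi(n^\alpha),1/\log n)$; the free parameter $\alpha\in(0,1)$ is then fixed to balance this band against the far regime.

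The delocalization bound $\max_i\p(X_i=M_n)\le n^{-\beta}$ is itself a lemma I would establish first, relying on $\phi$ non-increasing and $\phi(1)<1/2$ to guarantee that no coordinate carries a non-negligible share of the argmax mass (the hypotheses keep all pairwise correlations bounded away from $1$ and make the maximum behave, at leading order, as in the independent case, with $\E[M_n]\sim\sqrt{2\log n}$ as in Theorem~\ref{CVd}). I expect the genuine obstacle to be the intermediate band: for small $t$ the event $\{I(\X)=I(\X_t)\}$ forces the band probability to vanish, yet the hypercontractive bound $\langle\ind_{\{X_i=M_n\}},P_t\ind_{\{X_j=M_n\}}\rangle\le\p(X_i=M_n)^{1/r(t)}\p(X_j=M_n)^{1/r(t)}$ is far too lossy there, since summing over the $\sim n^{1+\alpha}$ near pairs destroys the gain. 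Handling this regime requires either treating small and large $t$ separately or a sharper description of the joint law of $(I(\X),I(\X_t))$ that reflects the vanishing of the closeness probability as $t\to0$; this is the technical heart of the argument and the place where the choice of $\alpha$ does real work.

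Finally I would convert the moment growth into the tail bound. The $2q$-moment recursion, combined with stationarity of the interpolation ($\X_t\sim\mathcal N(0,\Sigma)$, whence $Y_n(\X_t)\overset{d}{=}Y_n$) and H\"older's inequality, should close to give $\|Y_n\|_{2q}\le C\,q\,\sqrt{v_n}$ with $v_n=\max(\phi(n^\alpha),1/\log n)$, the linear-in-$q$ dependence (rather than $\sqrt q$) reflecting the loss incurred in the hypercontractive control of the band probability and accounting for the exponential rather than Gaussian tail. Markov's inequality applied at the optimal $q\approx t/(eC\sqrt{v_n})$ then yields $\p(|Y_n|>t)\le\exp(-c\,t/\sqrt{v_n})$; tracking the passage from even integers to all real $q\ge1$ and the one-sided/two-sided split produces the harmless numerical constant $6$ in \eqref{ICd}.
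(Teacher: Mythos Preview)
Your three-regime split correctly isolates the intermediate band $1\le|I-I_t|\le n^\alpha$ as the obstacle, but you do not resolve it. Bounding by $\phi(1)\,\p(1\le|I-I_t|\le n^\alpha)$ and then controlling the probability via pairwise hypercontractivity $\langle 1_{\{I=i\}},P_t 1_{\{I=j\}}\rangle\le\p(I=i)^{1/r}\p(I=j)^{1/r}$ fails exactly as you say: summing over $\sim n^{1+\alpha}$ near pairs loses a factor $n^\alpha$ that cannot be recovered. Your proposed fixes (small vs.\ large $t$, or a finer description of the joint law of $(I,I_t)$) are not concrete enough to close the argument, and the moment recursion in the last paragraph presupposes that this step has already been carried out.

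The missing idea is a \emph{block covering}: do not separate the diagonal from the band. Cover $\{1,\dots,n\}$ by overlapping intervals $D$ of length $2\lfloor n^\alpha\rfloor$ (shifted by $\lfloor n^\alpha\rfloor$, so each index lies in at most three blocks). Whenever $\phi(|I-I^t|)\ge r_0:=\phi(\lfloor n^\alpha\rfloor)$ one has $|I-I^t|\le n^\alpha$, hence $I,I^t\in D$ for some common block. Apply hypercontractivity to the \emph{block} indicator $1_{\{I\in D\}}$ rather than to $1_{\{I=i\}}$: with $p=1+e^{-t}$, H\"older plus hypercontractivity give
\[
\E\big[e^{\theta(M_n+M_n^t)/2}1_{\{I,I^t\in D\}}\big]\le\p(I\in D)^{(2-p)/p}\,\E\big[e^{\theta M_n}1_{\{I\in D\}}\big],
\]
and summing over blocks (bounded overlap) yields $C\rho^{(2-p)/p}\E[e^{\theta M_n}]$ with $\rho=\max_D\p(I\in D)\le 2n^\alpha\max_i\p(I=i)\le n^{-\eta}$; the delocalization input is proved via $\p(I=i)\le 2e^{-\E[M_n]^2/2}$ together with Sudakov's minoration $\E[M_n]\ge c(1-\phi(1))\sqrt{\log n}$, which is where $\phi(1)<1/2$ is used. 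Integrating, $\int_0^\infty e^{-t}\rho^{\tanh(t/2)}dt\le 1/\log(1/\rho)=O(1/\log n)$, and the far part contributes $r_0=\phi(n^\alpha)$. The paper runs all of this at the Laplace-transform level, obtaining $\mathrm{Var}(e^{\theta M_n/2})\le K\tfrac{\theta^2}{4}\E[e^{\theta M_n}]$ with $K=\max(\phi(n^\alpha),1/\log n)$ and then invoking an exponential Poincar\'e lemma to pass to the tail; this is also cleaner than the $2q$-moment recursion you sketch.
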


\begin{rem}
Under the hypothesis of Theorem \ref{CVd}, $\max\left(\phi(n^\alpha),1/\log n\right)=1/\log n$
for $n$ large enough,
which is exactly the fluctuation rate. Observe furthermore
that integrating \eqref {ICd} recovers the variance bounds of
\cite{Chatt1}.
\end{rem}

 It is important to compare Theorem~\ref {GS} to 
classical Gaussian concentration (see e.g.~\cite{Led}) which typically produces
\begin{equation}\label{TBSI}
\p\left( |M_n-\mathbb{E}\left[ M_n\right]|\geq t\right)\leq 2e^{-t^2/2},\quad t\geq0.
\end{equation}
While of Gaussian tail, such bounds do not reflect the fluctuations of the extremum $M_n$ of Theorem \ref{CVd}.
Moreover, with respect to this Gaussian bound,
Theorem \ref{GS} actually provides the correct tail behavior of the maximum $M_n$ in the form of a superconcentration inequality, in accordance with the fluctuation result and the limiting Gumbel distribution
 (since $ \mathbb{P} (G > x) \sim e^{-x}$ as $ x \to \infty$).
Let us also emphasizes that Theorem \ref{GS} covers the classical independent case, when all the $X_i$'s are independent, by taking $\phi=0$ and provides the following concentration inequality which will be useful in the last section,
\begin{equation}\label{Indc}
\p\left(\left|M_n-\E\left[M_n\right]\right|\geq t\right)\leq 6e^{-ct\sqrt{\log n}},\quad t\geq 0.
\end{equation}

Observe in addition that Theorem~\ref {GS} expresses a concentration
property of the maximum around its mean whereas, in the regime of the convergence of extremes
of Theorem~\ref {CVd},
the centerings are produced by explicit values $b_n$. Actually, up to numerical constants,
the same inequalities hold true around the constant $b_n$ instead of the mean.
To this task, it is enough to prove that
$\sup_n\E\left[ |a_n(M_n-b_n)| \right]<\infty$. Set $Z_n=a_n(M_n-b_n)$.
Letting $M_n'$ be an independent copy of $M_n$, set similarly $Z_n'=a_n(M_n'-b_n)$.
Now, integrating \eqref {ICd},
$\sup_n\E\left[ a_n|M_n-\E[ M_n]|\right]<\infty$. Hence $\sup_n\E\left[ |Z_n-Z_n'|\right]<\infty$ from which it easily follows that $\sup_{n}\E\left[ |Z_n|\right]<\infty.$ 

\medskip

The next statement is the analogue of Theorem~\ref {GS} for stationary
Gaussian processes, suitably quantifying Theorem~\ref{Cvc}. It is presented 
in the more general context of a centered stationary Gaussian field
$(X_t)_{t\in\R^d}$ such that
$\E\left[ X_t^2\right]=1$, $ t \in \R^d$,
with covariance function $\phi$.
According to \cite{Chatt1}, very
little seems actually to be known on the asymptotic fluctuations of the supremum of stationary Gaussian processes 
indexed by $\R^d$ when the dimension $d$ is greater than two. 
Some recent specific results are available for Gaussian fields with logarithmic correlation
(see e.g.~\cite{Aco,DRSV1,Mad,ZD} and \cite{DRSV2} for an overview on the subject).
However, extending similarly the variance bounds of \cite{Chatt1}, we obtain a concentration inequality for the supremum of a Gaussian field over a subset $A$ of $\R^d$.
The case $d=1$ thus covers Theorem~\ref{Cvc}.

\begin{thm}\label{GP}
Let $(X_t)_{t\in\R^d}$ be a stationary Gaussian Euclidean field with
covariance function $\phi$. Assume that $t\mapsto\phi(t)$ is non-increasing and $\phi(1)<1/2$. 
If $A$ is a subset of $\R^d$, denote by $N(A)=N(A,1)$
the minimal number of balls with radius $1$ needed to cover $A$. Set $M(A) = \sup_{s\in A}X_s$.
 Then, there exist $ C =C(\phi,d) > 0$ and $c=c(\phi,d) > 0$ only depending
on $\phi$ and $d$ such that,
for all $A \subset \R^d$ with $N(A)>1$,
$$
\p\left(|M_A-\E\left[ M_A\right]|\geq t\right)
\leq 6e^{-ct/\sqrt{K_{N(A)}}},\quad t\geq 0,
$$
where
$$
K_{N(A)}=\max\left(\phi\left(N(A)^C\right),1/\log N(A)\right).
$$
\end{thm}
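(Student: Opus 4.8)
The plan is to run Chatterjee's hypercontractive superconcentration argument --- the same one behind Theorem~\ref{GS} --- directly on the functional $M_A=\sup_{s\in A}\langle a_s,g\rangle$, where $X_s=\langle a_s,g\rangle$ is the standard Gaussian representation of the field ($|a_s|=1$, $\langle a_s,a_t\rangle=\phi(|s-t|)$) and $g$ is a standard Gaussian vector. Since $M_A$ is $1$-Lipschitz in $g$ (its a.s.\ gradient is $a_{S}$, with $S$ the a.s.\ unique maximizer), the Gaussian bound \eqref{TBSI} holds automatically; the task is to upgrade the exponent to the superconcentrated scale $t/\sqrt{K_{N(A)}}$. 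I would \emph{not} try to split $M_A$ into a discrete maximum plus a unit-scale oscillation: that oscillation is only $O(1)$-Lipschitz and any crude splitting reintroduces the Gaussian scale, washing out the improvement. The gain must be extracted from $M_A$ as a single object.

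The engine is the Ornstein--Uhlenbeck representation. With $g_\tau=e^{-\tau}g+\sqrt{1-e^{-2\tau}}\,g'$ ($g'$ an independent copy), the variance, and more generally the even moments of $M_A-\E[M_A]$, are governed by
$$
\psi(\tau)=\E\big[\langle\nabla M_A(g),\nabla M_A(g_\tau)\rangle\big]=\E\big[\phi(|S-S_\tau|)\big],
$$
where $S=S(g)$ and $S_\tau=S(g_\tau)$ are the maximizers under the two coupled inputs. The moment-to-tail passage --- expressing $\|M_A-\E M_A\|_p$ through $\int_0^\infty e^{-\tau}\psi(\tau)\,d\tau$ by hypercontractivity and summing --- is identical to the one carried out for Theorem~\ref{GS}, so I would cite it and reduce everything to an upper bound on $\psi$.

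The whole matter then rests on estimating $\psi(\tau)$, and this is where $\phi$ non-increasing and $\phi(1)<1/2$ are spent. For a threshold $r\ge1$, monotonicity of $\phi$ gives
$$
\psi(\tau)=\E\big[\phi(|S-S_\tau|)\big]\le \P\big(|S-S_\tau|\le r\big)+\phi(r),
$$
so it suffices to show that the coupled maximizers separate. Discretizing $A$ at scale $r$ into $N(A,r)$ cells and writing $I=I(g)$ for the cell of the maximizer, the hypercontractive (noise-stability) estimate applied to the cell indicators, exactly as in the independent case \eqref{Indc}, controls $\P(I=I_\tau)=\sum_j\E[\ind_{I=j}P_\tau\ind_{I=j}]$ and yields $\P(|S-S_\tau|\le r)\lesssim e^{-c\tau\log N(A,r)}$, the gain $\log N(A,r)$ reflecting that, thanks to $\phi(1)<1/2$, the maximizer charges order $N(A,r)$ distinct cells. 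Integrating against $e^{-\tau}$,
$$
\int_0^\infty e^{-\tau}\psi(\tau)\,d\tau\;\lesssim\;\frac{1}{\log N(A,r)}+\phi(r).
$$
The $d$-dimensional relation $N(A,r)\asymp_d N(A)\,r^{-d}$ then lets me pick $r=N(A)^{C}$ with $C=C(\phi,d)$ small enough that $\log N(A,r)\asymp\log N(A)$, balancing the two terms into $\max\big(\phi(N(A)^C),1/\log N(A)\big)=K_{N(A)}$. The matching moment estimates deliver the tail inequality with $c=c(\phi,d)$ and the prefactor $6$.

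The main obstacle is the maximizer-separation estimate $\P(|S-S_\tau|\le r)\lesssim e^{-c\tau\log N(A,r)}$ in the genuinely continuous setting. For a finite Gaussian vector this is the clean hypercontractive bound on $\sum_j\E[\ind_{I=j}P_\tau\ind_{I=j}]$; but here the maximizer ranges over a continuum and may drift within an $r$-cell, so I must first show that the within-cell oscillation of the field is swamped by the gap between the leading cell value and its competitors --- of order $1/\sqrt{\log N(A)}$ --- so that, with high probability, the maximizer is pinned to the single $r$-cell selected by the cell-maxima, to which the finite hypercontractive estimate applies. Carrying this out uniformly over an arbitrary $A\subset\R^d$, under only $\phi$ non-increasing and $\phi(1)<1/2$, and propagating it through the higher moments, is the delicate point; once it is in place, the remaining computations duplicate those behind Theorem~\ref{GS}.
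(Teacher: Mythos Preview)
Your architecture is right --- this is exactly Chatterjee's hypercontractive scheme with a covering at an intermediate scale $s_0=N(A)^C$, which is what the paper does via Theorem~\ref{THm}. But you have inverted the difficulties. The ``main obstacle'' you flag (continuous maximizer, within-cell oscillation, pinning $S$ to a cell) is a non-issue in the paper's route: Theorem~\ref{THm} is stated for a finite vector but extends to $\sup_{s\in A}X_s$ by monotone convergence (Remark~\ref{Cov}), and the bound on $\rho(r_0)=\max_{D}\mathbb P(I\in D)$ never needs to localize the maximizer inside a cell. One simply writes $\mathbb P(I\in D)\le \mathbb P(M_D\ge t)+\mathbb P(M_A< t)$ and uses Gaussian concentration together with the two-sided estimate $c_1\sqrt{\log N(A)}\le m(A)\le c_2\sqrt{\log N(A)}$ from \cite{Chatt1}; choosing $s_0=N(A)^{(c_1/c_2)^2/8}$ forces $m(A)-m(D)\gtrsim\sqrt{\log N(A)}$ and hence $\rho(r_0)\le N(A)^{-C(\phi,d)}$. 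Also, the paper does not go through $L^p$ moments of $M_A-\E M_A$; Theorem~\ref{GS} is proved via the exponential-variance inequality \eqref{SCI} for $e^{\theta M_n/2}$ and then Lemma~\ref{IPe}, so your ``identical to the one carried out for Theorem~\ref{GS}'' is not accurate.

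Conversely, the step you breeze over is the one that actually needs the hypotheses. Your bound $\mathbb P(|S-S_\tau|\le r)\lesssim e^{-c\tau\log N(A,r)}$ would require $\max_D\mathbb P(I\in D)\lesssim 1/N(A,r)$, and ``the maximizer charges order $N(A,r)$ distinct cells because $\phi(1)<1/2$'' is not a proof: stationarity gives no symmetry between cells of an arbitrary $A$. What $\phi(1)<1/2$ really buys is the Sudakov lower bound $m(A)\ge c_1\sqrt{\log N(A)}$, and it is this (together with the Dudley upper bound on $m(D)$) that yields the cell-probability estimate --- with exponent governed by $\log N(A)$, not by $\log N(A,r)$. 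Once you plug in $\rho(r_0)\le N(A)^{-C}$ and $r_0=\phi(s_0)$ into Theorem~\ref{THm}, the conclusion drops out immediately; no discretization-of-the-maximizer argument is needed.
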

\begin{rem}
When $d=1$ and $A=[ 0,T]$ for some $T>0$, $N(A)= T/2$ and
under the hypotheses of Theorem \ref{Cvc}, $K_{N(A)}=1/\log T$ for $T$ large enough.
\end{rem} 

 The proofs of Theorem \ref{GS} and \ref{GP} are based on the
hypercontractive approach developed by Chatterjee \cite{Chatt1} towards variance bounds.
The task will be to adapt the argument to reach exponential tail inequalities at the correct fluctuation regime.
This will be achieved via the corresponding variance bound at the level of Laplace transforms.

The paper is organized as follows. In the next section, we present the semigroup tools used to prove the main results. The third section is devoted to the proof of Theorems \ref{GS} and \ref{GP}.
In Section 4, we present some illustrations to further Gaussian models. In the final section, we present an illustration in statistical testing.

 
 \section{Framework and tools}
 
  \subsection{Notation}
  
 For any $n\geq 1$, denote by $X=(X_1,\ldots, X_n)$ a centered Gaussian vector, with maximum
$M_n=\max_{i=1,\ldots,n}X_i$. Set $I=\mathrm{argmax}_{i=1,\ldots,n}X_i$. 

The main technical part of this work is provided by the following extension of Chatterjee's approach
\cite{Chatt1} adapted to exponential concentration bounds. The very basis of the argument is hypercontractivity of the Ornstein-Uhlenbeck semigroup
together with the exponential version of Poincar\'e's inequality
(cf.~\cite {Bak,BGL,Chatt1}).

 \begin{thm}\label{THm}
Let $X=(X_1,\ldots,X_n)$ be a centered Gaussian vector with covariance matrix $\Gamma$. 
 Assume that for some $r_0\geq0$, there exists a non trivial covering $\C(r_0)$ of $\{1,\ldots ,n\}$ verifying the following properties:
\begin{itemize}
\item  for all $i,j\in\{1,\ldots,n\}$ such that $\Gamma_{ij}\geq r_0$, there exists
$D\in\C(r_0)$ such that $i,j\in D$;\\
\item there exists $C\geq1$ such that, $a.s.$, $\sum_{D\in\C(r_0)} 1_{\{ I\in D \} }\leq C$.\\
\end{itemize} 
\noindent Let $\rho(r_0)=\max_{D\in\C(r_0)}\p(I\in D)$. 

Then, for every $\theta\in\R$, 
\begin{equation}\label{SCI}
\mathrm{Var}\left(e^{\theta M_n/2}\right)\leq C \, \frac{\theta^2}{4}\left(r_0+\frac{1}{\log\left(1/\rho(r_0)\right)}\right)
\E\left[e^{\theta M_n}\right].
\end{equation}
In particular,
$$
\p(|M_n-\E[ M_n]|\geq t)\leq 6e^{-ct/\sqrt{K_{r_0}}},\quad t\geq 0,
$$
where $K_{r_0}=\max\big(r_0,\frac{1}{\log(1/\rho(r_0))}\big)$ and $c>0$ is a universal constant.
\end{thm}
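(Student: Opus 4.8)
The plan is to establish the Laplace-transform bound \eqref{SCI} first, and then to deduce the stated subexponential tail from it by a self-improving functional inequality. Throughout I would write $X=\Gamma^{1/2}Z$ with $Z$ a standard Gaussian vector in $\R^n$, so that $M_n=\max_i(\Gamma^{1/2}Z)_i$ is a Lipschitz function of $Z$ whose gradient is $\nabla_Z M_n=(\Gamma^{1/2})_{I\cdot}$ (the $I$-th row of $\Gamma^{1/2}$), defined almost everywhere since the argmax $I$ is a.s.\ unique. Fix $\theta\in\R$ and put $F=e^{\theta M_n/2}$. The starting point is the interpolation representation of the covariance for the Ornstein--Uhlenbeck dynamics: setting $Z^t=tZ+\sqrt{1-t^2}\,Z'$ with $Z'$ an independent copy of $Z$, one has $\mathrm{Var}(F)=\int_0^1\E[\nabla_Z F(Z)\cdot\nabla_Z F(Z^t)]\,dt$. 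Since $\nabla_Z F=\tfrac\theta2 F\,(\Gamma^{1/2})_{I\cdot}$ and $\langle(\Gamma^{1/2})_{I\cdot},(\Gamma^{1/2})_{I^t\cdot}\rangle=\Gamma_{I I^t}$, this reads
\begin{equation*}
\mathrm{Var}(e^{\theta M_n/2})=\frac{\theta^2}{4}\int_0^1\E\!\left[e^{\theta M_n/2}\,e^{\theta M_n^t/2}\,\Gamma_{I I^t}\right]dt,
\end{equation*}
where $M_n^t$ is the maximum and $I^t$ the argmax computed from $X^t=\Gamma^{1/2}Z^t$ (which is distributed as $X$).

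Next I would split the integrand according to whether $\Gamma_{I I^t}<r_0$ or $\Gamma_{I I^t}\geq r_0$. On the first event I bound $\Gamma_{I I^t}\le r_0$ and use Cauchy--Schwarz together with $M_n^t\overset{d}{=}M_n$ to get $\E[e^{\theta M_n/2}e^{\theta M_n^t/2}]\le\E[e^{\theta M_n}]$, contributing $\tfrac{\theta^2}{4}r_0\,\E[e^{\theta M_n}]$. On the second event, bounding the correlation term by $1$ and invoking the first covering hypothesis gives $1_{\{\Gamma_{II^t}\ge r_0\}}\le\sum_{D\in\C(r_0)}1_{\{I\in D\}}1_{\{I^t\in D\}}$, so it suffices to control, for each $D$, the quantity $\int_0^1\E[h_D(Z)h_D(Z^t)]\,dt$ with $h_D=e^{\theta M_n/2}1_{\{I\in D\}}\ge0$. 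Here is the crux. Writing $t=e^{-s}$, one has $\E[h_D(Z)h_D(Z^t)]=\langle h_D,P_s h_D\rangle$ for the OU semigroup $P_s$; hypercontractivity in its symmetric bilinear form yields $\langle h_D,P_s h_D\rangle\le\|h_D\|_{1+t}^2$, and a H\"older split of $e^{(1+t)\theta M_n/2}1_{\{I\in D\}}$ between $e^{\theta M_n}1_{\{I\in D\}}$ and $1_{\{I\in D\}}$ gives $\E[h_D(Z)h_D(Z^t)]\le\rho(r_0)^{(1-t)/(1+t)}\,\E[e^{\theta M_n}1_{\{I\in D\}}]$. Integrating the elementary estimate $\int_0^1\rho^{(1-t)/(1+t)}\,dt\lesssim 1/\log(1/\rho(r_0))$ and summing over $D$ with the second covering hypothesis $\sum_D 1_{\{I\in D\}}\le C$ produces $\tfrac{\theta^2}{4}\cdot\tfrac{C}{\log(1/\rho(r_0))}\E[e^{\theta M_n}]$, up to a universal constant. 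Combining the two contributions gives \eqref{SCI}. I expect this hypercontractive correlation-decay step --- extracting the logarithmic gain from the $t$-integral rather than the crude factor $C$ produced by Cauchy--Schwarz alone --- to be the main obstacle, exactly as in Chatterjee's variance argument, the novelty being that it must be carried out at the level of the exponential moment $e^{\theta M_n/2}$.

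Finally, to pass from \eqref{SCI} to the tail bound, set $\kappa=K_{r_0}$ and $L(\theta)=\E[e^{\theta(M_n-\E M_n)}]$. Since $r_0+1/\log(1/\rho(r_0))\le 2\kappa$, \eqref{SCI} rewrites as $L(\theta)-L(\theta/2)^2\le \tfrac{C}{2}\kappa\theta^2 L(\theta)$, i.e.\ $L(\theta/2)^2\ge(1-\tfrac{C}{2}\kappa\theta^2)L(\theta)$. Taking logarithms and using $-\log(1-u)\le 2u$ for $u\le1/2$, the function $H=\log L$ satisfies $H(\theta)-2H(\theta/2)\le C'\kappa\theta^2$ for $\abs\theta\le c_0/\sqrt\kappa$. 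I would then iterate this relation, replacing $\theta$ by $\theta/2,\theta/4,\dots$ and summing the resulting geometric series, while using $2^k H(\theta/2^k)\to0$ (valid since $H(0)=H'(0)=0$); this yields $H(\theta)\le C''\kappa\theta^2$, that is $\E[e^{\theta(M_n-\E M_n)}]\le e^{C''\kappa\theta^2}$ on the range $\abs\theta\le c_0/\sqrt\kappa$. A Chernoff optimization over this bounded range --- the Gaussian regime for $t\lesssim\sqrt\kappa$ and the boundary choice $\theta=c_0/\sqrt\kappa$ for larger $t$ --- gives $\p(M_n-\E M_n\ge t)\le 3e^{-ct/\sqrt\kappa}$, and the same argument with $\theta<0$ controls the lower tail, producing the factor $6$ in the two-sided estimate. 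This last part is essentially routine once \eqref{SCI} is in hand; the only care needed is tracking that all manipulations stay within the admissible range of $\theta$.
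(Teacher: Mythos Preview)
Your proof is correct and follows essentially the same route as the paper: the Ornstein--Uhlenbeck interpolation formula for the variance of $e^{\theta M_n/2}$, the split of $\Gamma_{II^t}$ at level $r_0$, hypercontractivity with exponent $p=1+t$ (your $t\in[0,1]$ is the paper's $e^{-s}$, so your $(1-t)/(1+t)$ is exactly their $\tanh(s/2)$), and then the exponential Poincar\'e mechanism. The only cosmetic differences are that you bypass the paper's dyadic decomposition of $\Gamma_{II^t}$ via a direct two-piece split, and that you spell out the iteration argument behind the tail bound which the paper packages as a cited lemma.
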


\begin{rem}\label{Cov}
By monotone convergence, we can obtain the same result for a supremum instead of a maximum. This fact will be useful to obtain an application of Theorem~\ref{THm} for a Gaussian Euclidean field.\\
\end{rem}

\begin{proof}
As announced, the scheme of proof follows \cite {Chatt1}.
The starting point is the representation formula
\begin{equation}\label{OH}
\mathrm{Var}(f)=\int_0^{\infty}e^{-t}\E\left[\nabla f\cdot P_t\nabla f\right] dt
\end{equation}
for the variance of a (smooth) function $ f : \R^n \to \R$ along
the Ornstein-Uhlenbeck semigroup ${(P_t)}_{t \geq 0}$.
General references on the Ornstein-Uhlenbeck semigroup and more general Markov semigroups,
as well as such semigroup interpolation formulas are \cite{Bak,BGL}.

Following \cite{Chatt1},
given a centered Gaussian vector $X$ with covariance matrix $\Gamma =(\Gamma_{ij})_{1 \leq i,j \leq n}$,
apply \eqref {OH} to (a smooth approximation of) $ f = e^{\theta M/2}$ where
$M(x)=\max_{i=1,\ldots,n}(Bx)_i$ and $\Gamma = B \, {}^t \!B$. It yields that, for all $\theta\in\R$,
$$
\mathrm{Var}\left(e^{\theta M_n/2}\right)=\frac{\theta^2}{4}\int_0^{\infty} \! e^{-t} \, 
\E\bigg[\sum_{i,j=1}^n \Gamma_{ij} \, 
1_{ \{X\in A_i \} }  e^{\theta M_n/2}1_{ \{ X^t\in A_j \} }e^{\theta M_n^t/2}\bigg] dt .
$$
Here $\{X\in A_i\}=\{I=i\}$, $i = 1, \ldots, n$, and $X^t = e^{-t} X + \sqrt {1 - e^{-2t}} \, Y$,
where $Y$ is an independent copy of $X$, with corresponding maximum $M_n^t$. This process shares the same hypercontractivity property
as the Ornstein-Uhlenbeck process (cf.~\cite{Chatt1}). Indeed, assuming without loss of generality that $B$ is invertible (otherwise restrict to a subspace of $\R^n$), define the semigroup $(Q_t)_{t\geq 0}$ by 
$$
Q_tf(x)=\E\left[f\left(e^{-t}x+\sqrt{1-e^{-2t}}Y\right)\right]
$$
for $f\,:\,\R^n\to\R$ smooth enough. Let $Z$ be a standard Gaussian vector on $\R^n$
 so that $Y=BZ$ in law. Thus, by defining $g\,:\,\R^n\to\R$ as $g(x)=f(Bx)$, we have, for all $x\in\R^n$,
\begin{eqnarray*}
Q_tf(x)&=&\E\left[f\left(e^{-t}x+\sqrt{1-e^{-2t}}Y\right)\right]\\
&=&\E\left[f\left(e^{-t}BB^{-1}x+\sqrt{1-e^{-2t}}BZ\right)\right]\\
&=&\E\left[g\left(e^{-t}B^{-1}x+\sqrt{1-e^{-2t}}Z\right)\right]=P_tg(B^{-1}x)
\end{eqnarray*}
Therefore, by the hypercontractive property of $(P_t)_{t\geq 0}$ with the same exposants $p$ and $q$ as the Ornstein-Uhlenbeck semigroup, 
\begin{equation}\label{OU2}
\E\left[\left|Q_tf(X)\right|^q\right]^{1/q}=\|P_tg\|_q\leq\|g\|_p=\E\left[\left|f(X)\right|^p\right]^{1/p},
\end{equation}
where $\|\cdot\|_r$, for any $r\geq1$, stands for the $L^r$-norm with respect to the standard Gaussian measure on $\R^n$.\\
Now, for every $ t \geq 0$, with $I^t =\mathrm{argmax}_{i=1,\ldots,n}X^t_i$,
\begin{eqnarray*}
 \mathcal{I} &=& 
\E\bigg[\sum_{i,j=1}^n \Gamma_{ij} \, 1_{\{X\in A_i\}}e^{\theta M_n/2}
    1_{\{ X_t\in A_j \} }e^{\theta M_n^t/2}\bigg] \\
&=&\E\bigg[ e^{\theta(M_n+M_n^t)/2}\sum_{i,j=1}^n  \Gamma_{ij} 
   \, 1_{\{ I=i \} }1_{\{ I^t=j \} } \bigg ]\\
&=&\E\left[e^{\theta\big(M_n+M_n^t\big)/2} \, \Gamma_{II^t}\right]\\
&=&\sum_{k\geq 0}\E\left[ e^{\theta\big(M_n+M_n^t\big)/2}
   \, \Gamma_{II^t}1_{\{2^{-k-1}\leq \Gamma_{II^t}\leq2^{-k}\}}\right].
\end{eqnarray*}
To ease the notation, denote $\Gamma_{II^t}$ by $\Gamma$ and set
$F^t=M_n+M_n^t$. Let $k_0=\min\{k\geq 0\, ; r_0\leq 2^{-k-1}\}$.
Cutting the preceding sum into two parts, we obtain
\begin{eqnarray*}
\mathcal{I}&\leq& \sum_{k=0}^{k_0}2^{-k} \, \E\left[ e^{\theta F^t/2}1_{\{\Gamma\geq r_0\}}\right]+\sum_{k=k_0+1}^{\infty}2^{-k} \, \E\left[ e^{\theta F^t/2}1_{\{2^{-k-1}\leq \Gamma\leq 2^{-k} \}}\right]\\
&\leq& 2 \sum_{D\in\C(r_0)}\E\left[ e^{\theta F^t/2}1_{\{I,I^t\in D\}}\right]+\sum_{k\geq 0}r_0 \, \E\left[ e^{\theta F^t/2}1_{\{2^{-k-1}\leq \Gamma\leq 2^{-k}\}}\right]\\
&=& 2 \sum_{D\in\C(r_0)}\E\left[ e^{\theta F^t/2}1_{\{I,I^t\in D\}}\right]
    +r_0 \, \E\left[ e^{\theta F^t/2}\right] .
\end{eqnarray*}
By the Cauchy-Schwarz inequality and Gaussian rotational invariance,
$$
\E\left[ e^{\theta F^t/2}\right]
\leq \E\left[e^{\theta M_n}\right].
$$
On the other hand, by H\"older's inequality with $\frac {1}{p} + \frac {1}{q} = 1$,
\begin {equation*} \begin {split}
\E\left[ e^{\theta F^t/2}1_{\{I,I^t\in D\}} \right] 
 & = \E\left[ e^{\theta M_n/2}1_{\{I\in D\}} \, 
         e^{\theta M_n^t/2}1_{\{I^t\in D\}} \right] \\
 &\leq \E\left[ e^{\theta pM_n/2}1_{\{I\in D\}} \right]^{1/p} \,
        \E\left[ e^{\theta qM_n^t/2}1_{\{I^t\in D\}} \right]^{1/q}. \\
\end {split} \end {equation*}
Next, by the hypercontractivity property \eqref{OU2},
$$
\E\left[ e^{\theta qM_n^t/2}1_{\{I^t\in D\}} \right]^{1/q}
  \leq \E\left[ e^{\theta pM_n/2}1_{\{I\in D\}} \right]^{1/p}
$$
provided that $e^{2t} = \frac {q-1}{p-1}$, that is $p = 1 + e^{-t} < 2$.
As a consequence,
$$
\E\left[ e^{\theta F^t/2}1_{\{I,I^t\in D\}} \right] 
  \leq  \E\left[ e^{\theta pM_n/2}1_{\{I\in D\}} \right]^{2/p}
$$
and by a further use of H\"older's inequality,
$$
\E\left[ e^{\theta F^t/2}1_{\{I,I^t\in D\}} \right] 
\leq \p(I\in D)^{\frac{2-p}{p}} \, 
   \E\left[ e^{\theta M_n}1_{\{I\in D\}} \right] .
$$
Combining the preceding with the assumptions, we get that 
$$\mathcal{I}\leq 
   \left(r_0 +C \rho(r_0)^{\frac{2-p}{p}}\right)  \, \E\left[ e^{\theta M_n}\right].$$
Finally,
$$\mathrm{Var}\left(e^{\theta M_n/2}\right)\leq C \, \frac{\theta^2}{4}
 \left(r_0+ \int_0^{\infty}e^{-t}\rho(r_0)^{\tanh(t/2)}dt\right) \E\left[ e^{\theta M_n}\right].$$
The announced inequality \eqref {SCI} then follows since
$$
\int_0^{\infty}e^{-t}\left(\rho(r_0)\right)^{\tanh(t/2)}dt\leq\frac{1}{\log\left(1/\rho(r_0)\right)} \, .
$$

To end the proof of Theorem~\ref {THm} and produce exponential tail bounds, 
we combine \eqref{SCI} with the following
lemma (see \cite{Led}).
\begin{lem}\label{IPe}
Let $Z$ be a random variable and $K>0$. Assume that for every 
$ |\theta|\leq 2/\sqrt{K}$,
\begin{equation}\label{ISC}
\mathrm{Var}\left(e^{\theta Z/2}\right)\leq K \, \frac{\theta^2}{4} \, \E\left[e^{\theta Z}\right].
\end{equation}
Then,
$$
\p\left(|Z-\mathbb{E}\left[Z\right]|>t\right)\leq 6e^{-ct/\sqrt{K}},
$$
for all $t\geq 0$, with $c>0$ a numerical constant.
\end{lem}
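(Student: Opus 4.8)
The plan is to convert the hypothesis \eqref{ISC} into a functional inequality for the Laplace transform of $Z$ and then iterate it towards the origin, where the centering $\E[Z-\E Z]=0$ forces the boundary term to vanish. Concretely, set $f(\theta)=\E[e^{\theta Z}]$, which is finite for $|\theta|\leq 2/\sqrt{K}$ by hypothesis. Since $\mathrm{Var}(e^{\theta Z/2})=f(\theta)-f(\theta/2)^2$, the assumption \eqref{ISC} reads
$$
f(\theta/2)^2\geq\Big(1-\frac{K\theta^2}{4}\Big)f(\theta),\qquad |\theta|\leq\frac{2}{\sqrt{K}}.
$$

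First I would center the variable. Writing $g(\theta)=\E[e^{\theta(Z-\E Z)}]=e^{-\theta\E Z}f(\theta)$, the factor $e^{-\theta\E Z}$ cancels in the quadratic relation above and leaves the same inequality for $g$. Passing to $L=\log g$, which satisfies $L(0)=0$ and, because $Z$ has finite exponential moments near $0$, $L'(0)=\E[Z-\E Z]=0$ with $L''(0)=\mathrm{Var}(Z)<\infty$, I obtain the self-improving bound
$$
L(\theta)\leq 2\,L(\theta/2)-\log\Big(1-\frac{K\theta^2}{4}\Big),\qquad |\theta|<\frac{2}{\sqrt{K}}.
$$

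The core of the argument is to iterate this inequality $m$ times, which yields
$$
L(\theta)\leq 2^m L(\theta/2^m)+\sum_{k=0}^{m-1}2^k\Big(-\log\big(1-\tfrac{K\theta^2}{4\cdot 4^k}\big)\Big),
$$
and then let $m\to\infty$. The hard part will be controlling the boundary term $2^m L(\theta/2^m)$: here the quadratic vanishing $L(s)=O(s^2)$ at the origin is exactly what is needed, since $2^m L(\theta/2^m)\sim 2^{-m}\,\tfrac{\theta^2}{2}\mathrm{Var}(Z)\to 0$. The remaining series converges because $-\log(1-x)\leq\tfrac{4}{3}x$ for $0\leq x\leq 1/4$, so the terms $k\geq 1$ are summable and contribute at most a constant multiple of $K\theta^2$, while only the $k=0$ term retains the singular factor. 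This gives, for $|\theta|<2/\sqrt{K}$,
$$
L(\theta)\leq-\log\Big(1-\frac{K\theta^2}{4}\Big)+\frac{K\theta^2}{3}.
$$

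Finally I would apply the Chernoff bound to both tails. Since the inequality for $L$ involves $\theta$ only through $\theta^2$, it holds for negative $\theta$ as well, so that $\p\big(\pm(Z-\E Z)>t\big)\leq e^{-\theta t+L(\theta)}$ for $0<\theta<2/\sqrt{K}$. Choosing $\theta=1/\sqrt{K}$ makes $K\theta^2/4=1/4$ and turns the exponent into $-t/\sqrt{K}$ plus an absolute constant ($\log(4/3)+1/3$), producing a bound of the form $(\mathrm{const})\,e^{-t/\sqrt{K}}$ for each tail; summing the two tails and absorbing the numerical constants yields the stated inequality with the factor $6$ and a universal $c>0$. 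The only genuine subtlety is the passage to the limit in the iteration, that is, verifying the quadratic behaviour of $L$ at $0$; everything else is elementary bookkeeping of constants.
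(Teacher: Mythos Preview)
Your argument is correct and is in fact the standard proof of this ``exponential Poincar\'e'' lemma; the paper does not give its own proof but simply cites \cite{Led} (and notes the prior use of the lemma in \cite{BR,DHS}). The halving iteration $L(\theta)\leq 2L(\theta/2)-\log(1-K\theta^2/4)$, the vanishing of $2^mL(\theta/2^m)$ from $L(s)=O(s^2)$, and the Chernoff step with $\theta=1/\sqrt{K}$ are exactly the ingredients of the cited reference, so there is no methodological difference to report.

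One small comment: the sentence ``which is finite for $|\theta|\leq 2/\sqrt{K}$ by hypothesis'' is not literally implied by \eqref{ISC} (the inequality is vacuous if $\E[e^{\theta Z}]=\infty$), so strictly speaking finiteness of the Laplace transform on this interval is an implicit assumption. In the paper's applications $Z=M_n$ is a maximum of Gaussians and has exponential moments of all orders, so this is harmless; but if you want the lemma to stand on its own you should either add the finiteness assumption explicitly or note that it follows from the context.
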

\begin{rem}
 Such a lemma has been used in prior works \cite {BR,DHS}
in order to obtain exponential concentration tail bounds for first passage percolation models.
\end{rem}
\end{proof}

\section{Proofs of the main results}

\subsection{Proof of Theorem \ref{GS}}
 The task is to show that the hypothesis of Theorem~\ref {THm} are satisfied.
 For $0<\alpha<1$, choose $r_0=\phi(\lfloor n^\alpha\rfloor)$ and
$\C(r_0) = \{ D_1, D_2, \ldots \}$ where
$D_1=\{1,\ldots,2\lfloor n^{\alpha}\rfloor\}$, $D_2=\{\lfloor n^\alpha\rfloor,\ldots, 3\lfloor n^{\alpha}\rfloor\}$ and so on, where $\lfloor \cdot\rfloor$ denote the integer part of a real number.

It is easily seen that this covering satisfies the hypothesis of Theorem~\ref{THm}.
 Indeed, take $i,j\in\{1,\ldots,n\}$ such that $i<j$. By stationarity,
 $\mathrm{Cov}(X_{i+1},X_j)=\phi(j-i)$. So if $\phi(j-i)\geq r_0=\phi(\lfloor n^{\alpha}\rfloor)$, since $\phi$ is non-increasing, we must have $j-i\leq \lfloor n^{\alpha}\rfloor$. 
By construction, any index $i$ is at most in three different elements
$D$ of $\C(r_0)$.
So $i$ belongs to some $D_{i_1}$, $D_{i_2}$ and $D_{i_3}$ with $i_1<i_2<i_3$ and
$j$ belong to $D_{j_1}$, $D_{j_2}$ and $D_{j_3}$ with $j_1<j_2<j_3$. Since $j-i\leq n^{\alpha}$ and the length of any $D$ in $\C(r_0)$ is $2\lfloor n^\alpha\rfloor$, there exists $s\in\{1,2,3\}$ such that $D_{i_s}=D_{j_s}$
(draw a picture) and we can choose $D=D_{i_s} \in\C(r_0)$.

Clearly $\sum_{D\in\C(r_0)}1_{\{I\in D\}}\leq C$, where $C$ is a universal constant (say $C=3$).
To end the proof, we have to bound $\max_{D\in\C(r_0)}\p(I\in D)$. Let $D\in\C(r_0)$, and to ease the notation, consider $D=\{1,\ldots,2\lfloor n^{\alpha}\rfloor\}$ (the important feature is that $D$ contains $2\lfloor n^{\alpha}\rfloor$ elements). Then 
$$
\p(I\in D)=\sum_{i=1}^{2\lfloor n^{\alpha}\rfloor}\p(I=i).
$$
 By standard Gaussian concentration (see \cite{Led} or the appendix of \cite{Chatt1}), for all $i\in D$,
\begin{eqnarray*}
\p(I=i)&=&\p(X_i=M_n)\\
&\leq&\p(X_i\geq t)+\p(M_n\leq t)\\
&\leq& 2e^{-\E\left[ M_n\right]^2/2}.
\end{eqnarray*}
The final step of the argument is to achieve a lower bound on $\E\left[ M_n\right]$.
To this task, we make use of Sudakov's minoration (see \cite{Ad} or \cite{LT}).
Letting $i,j\in D$, $i\neq j$, 
\begin{eqnarray*}
\E\left[(X_i-X_j)^2\right]&=&2-2\phi\big(|j-i|\big)\geq2\big(1-\phi(1)\big)=\delta
\end{eqnarray*}
since $\phi\big(|j-i|\big)\leq \phi(1)$. By the assumption on $\phi(1)$, $\delta>0$,
and thus by Sudakov's minoration there exists $c>0$ (independent of $n$) such that
$\E\left[ M_n\right]\geq c\delta\sqrt{\log n}$.
As a consequence of the preceding,
$$
\p(I=i)\leq\frac{2}{n^{(c\delta)^2/2}}=\frac{2}{n^{\epsilon}} \, .
$$
Hence
$$
 \p(I\in D)\leq \frac{4}{n^{\epsilon-\alpha}}=\frac{4}{n^\eta} 
$$
where $\alpha$ is such that $\eta=\epsilon-\alpha>0$, namely $\alpha<(c\delta)^2/2<1$.

The hypotheses of Theorem~\ref {THm} are therefore fulfilled with $\rho(r_0)\leq 4/n^{\eta}$ concluding the proof
of Theorem~\ref{GS}.

\subsection{Proof of Theorem~\ref{GP}}

We again follow Chatterjee's proof of variance bounds for Gaussian Euclidean fields (see Theorem~9.12 in \cite{Chatt1}). For any Borel set $A\subset\mathbb{R}^d$, 
set $m(A)=\E\left[M(A)\right] = \E\left[\sup_{s \in A} X_s\right]$.
Under the hypothesis of the theorem, it is proved in \cite{Chatt1}
that for any $A\subset\mathbb{R}^d$ such that $N(A)>1$,
$$
c_1(\phi,d)\sqrt{\log N(A)}\leq m(A)\leq c_2(\phi,d)\sqrt{\log N(A)}
$$
where $c_1=c_1(\phi,d)$ and $c_2=c_2(\phi,d)$ are positive constants that
depend only on the covariance $\phi$ and the dimension $d$ (and not on the subset $A$).
Set then
$$
s_0=N(A)^{\frac{1}{8}(c_1/c_2)^2},
$$
and assume that $N(A)$ is large enough so that
$s_0>2$. Let $r_0=\phi(s_0)$. Take a  maximal $s_0$-net  of $A$ ( i.e. a set of points that are mutually separated from each other by a distance strictly greater than $s_0$ and is maximal with respect to this property), and let $\C(r_0)$ be the set of $2s_0$-ball around the points in the net. In fact,  $\C(r_0)$ is a covering of $A$ satisfying the condition of Theorem \ref{THm}. With this construction, it is easily seen that
$\sum_{D\in\C(r_0)}1_{\{I\in D\}}\leq C$ and
$$
\max_{D\in\C(r_0)}\mathbb{P}(I\in D)\leq \frac{1}{N(A)^{C(\phi,d)}} \, .
$$
The conclusion then directly follows from Theorem~\ref{THm} with $\rho(r_0)\leq 1/N(A)^{C(\phi,d)}$.

\section{Other Gaussian models}
In \cite{Chatt1}, Chatterjee exhibits different models  in order to illustrate the so-called superconcentration phenomenon. For each of them, he managed to reach a better variance bound than the one produced by standard Gaussian concentration. We present for some of these models an exponential version.
\begin{prop}\label{Ic}
For any $n\geq 2$, let $X=(X_1,\ldots, X_n)$ be a centered Gaussian vector with $\E\left[ X_i\right]^2=1$
and $\E\left[ X_iX_j\right]\leq\epsilon$ for all $i,j = 1, \ldots , n$ for some
$\epsilon >0$. Then\\
$$
\mathbb{P}(|M_n-\mathbb{E}\lbrack M_n\rbrack|\geq t)\leq 6e^{-ct/\sqrt{K_n}},\quad t\geq 0,
$$
where $K_n=\max(\epsilon,1/\log n)$.
\end{prop}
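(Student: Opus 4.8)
The plan is to reduce Proposition \ref{Ic} to an application of Theorem \ref{THm}, exactly as in the proof of Theorem \ref{GS}, the only structural difference being that here the covariance control is a uniform bound $\E[X_iX_j]\leq\epsilon$ rather than a monotone decay of $\phi$. First I would set $r_0=\epsilon$ and choose the trivial covering $\C(r_0)=\{D_1\}$ with $D_1=\{1,\ldots,n\}$. Since every pair $(i,j)$ satisfies $\Gamma_{ij}\leq\epsilon=r_0$, the first covering condition of Theorem \ref{THm} is vacuously satisfied (there are no pairs with $\Gamma_{ij}>r_0$ to worry about, and for $\Gamma_{ij}\geq r_0$ the single set $D_1$ contains all indices). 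The second condition holds with $C=1$, since $\sum_{D\in\C(r_0)}1_{\{I\in D\}}=1_{\{I\in D_1\}}\leq1$ almost surely.

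It then remains to estimate $\rho(r_0)=\max_{D\in\C(r_0)}\p(I\in D)$. With the trivial covering this is simply $\p(I\in D_1)=1$, which is useless, so the single-set covering must be refined to make $\rho(r_0)$ small. Instead I would keep $r_0=\epsilon$ but take $\C(r_0)$ to be a covering into many small blocks, so that each block $D$ contains only a controlled number of indices and $\p(I\in D)$ is forced to be small via a union bound over $D$. Concretely, following the scheme of Theorem \ref{GS}, I would bound $\p(I=i)=\p(X_i=M_n)\leq 2e^{-\E[M_n]^2/2}$ by the standard Gaussian concentration estimate, and then invoke Sudakov's minoration: since $\E[(X_i-X_j)^2]=2-2\E[X_iX_j]\geq 2(1-\epsilon)>0$ for $i\neq j$ (assuming $\epsilon<1$), the points are uniformly separated and $\E[M_n]\geq c\sqrt{\log n}$ for a constant $c>0$ independent of $n$. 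This yields $\p(I=i)\leq 2/n^{\epsilon'}$ for some exponent $\epsilon'=\epsilon'(\epsilon)>0$, hence $\rho(r_0)\leq |D|\cdot 2/n^{\epsilon'}$; choosing the block size so that this product is at most $n^{-\eta}$ for some $\eta>0$ gives $1/\log(1/\rho(r_0))\leq c'/\log n$.

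Finally, combining the two ingredients via Theorem \ref{THm} gives
$$
\p(|M_n-\E[M_n]|\geq t)\leq 6e^{-ct/\sqrt{K_{r_0}}},\qquad K_{r_0}=\max\Big(\epsilon,\tfrac{1}{\log(1/\rho(r_0))}\Big),
$$
and since $1/\log(1/\rho(r_0))$ is comparable to $1/\log n$, we obtain $K_n=\max(\epsilon,1/\log n)$ as claimed. The main obstacle I anticipate is the bookkeeping in the covering: with no monotonicity assumption on a covariance function, the covering cannot be built from the decay structure of $\phi$ as in Theorem \ref{GS}, so I must verify by hand that a partition into equal blocks simultaneously satisfies the pairing condition (trivial here, since $r_0=\epsilon$ dominates every covariance) and yields the correct value of $\rho(r_0)$. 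The delicate point is choosing the block size to balance these two requirements so that the final rate is exactly $\max(\epsilon,1/\log n)$ rather than something weaker.
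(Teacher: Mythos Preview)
Your plan via Theorem~\ref{THm} is correct and is exactly the paper's route, but you are overcomplicating the covering step. The paper simply takes $r_0$ slightly above $\epsilon$ and uses the \emph{singleton} covering $\C(r_0)=\{\{1\},\ldots,\{n\}\}$: since every off-diagonal covariance satisfies $\Gamma_{ij}\leq\epsilon<r_0$, the first hypothesis of Theorem~\ref{THm} concerns only diagonal pairs $(i,i)$ and is therefore met by any covering whatsoever. With singletons one has $\sum_{D}1_{\{I\in D\}}=1$ and $\rho(r_0)=\max_i\p(I=i)$, which your own Sudakov/concentration estimate already bounds by $C/n^{\eta}$. So there is no block-size trade-off to perform; the ``delicate point'' you anticipate simply does not arise, and your detour through the trivial one-set covering followed by intermediate blocks is unnecessary.

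One small technical remark: setting $r_0=\epsilon$ exactly, as you do, is slightly unsafe, since pairs $i\neq j$ with $\Gamma_{ij}=\epsilon$ would then have to lie in a common block, which singletons (or arbitrary small blocks) need not guarantee. Taking $r_0>\epsilon$ makes the pairing condition vacuous for $i\neq j$, and letting $r_0\downarrow\epsilon$ at the end recovers $K_n=\max(\epsilon,1/\log n)$.
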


\begin{proof}
 Take $r_0>\epsilon$ and define $\C(r_0)=\left\{\{1\},\ldots,\{N\}\right\}$. 
It is easy to see that the covering $\C(r_0)$ enters the setting of Theorem~\ref{THm}.
Furthermore, $\sum_{D\in\C(r_0)}1_{\{I\in D\}}=1$ and $\max_{D\in\C(r_0)}\p(I\in D)\leq 1/n^{\eta}$ for some $\eta >0$. The conclusion follows. 
\end{proof}


 As another instance, consider the Gaussian field on $\{-1,1\}^n$, $n\geq1$,
defined in the following way. Let $X_1,\ldots, X_n$ be independent identically distributed standard normals, 
and define $f\,:\,\{-1,1\}^n\to\R$ as
$$
f(\sigma)=\sum_{i=1}^nX_i\sigma_i.
$$
\begin{cor}
There exists $S=(\sigma^1,\ldots,\sigma^N)\in\{-1,1\}^n$ for some $N$, such that, for all $\sigma\neq \sigma'\in S,$ $|\sigma\cdot\sigma'|\leq Cn^{2/3}$ and 
$$
\p\left(|f-\E\left[ f\right]|\geq t\right)\leq 6e^{-ct /n^{2/3}},
  \quad t \geq 0.
  $$
\end{cor}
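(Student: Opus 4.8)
The plan is to view the random variable in question as the maximum $M=\max_{\sigma\in S}f(\sigma)$ of the finite Gaussian vector $\big(f(\sigma)\big)_{\sigma\in S}$ over a well-chosen set $S\subset\{-1,1\}^n$, and then to apply Theorem~\ref{THm}. First I would compute the covariance structure: since $X_1,\ldots,X_n$ are i.i.d.\ standard normals,
$$
\E\big[f(\sigma)f(\sigma')\big]=\sum_{i=1}^n\sigma_i\sigma'_i=\sigma\cdot\sigma',
$$
so each $f(\sigma)$ has variance $n$ while the covariance of two coordinates is exactly $\sigma\cdot\sigma'$. The separation condition $|\sigma\cdot\sigma'|\leq Cn^{2/3}$ is therefore nothing but a bound on the off-diagonal entries of the covariance matrix $\Gamma$ of $\big(f(\sigma)\big)_{\sigma\in S}$.

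The combinatorial core of the statement --- and the step I expect to be the main obstacle --- is the existence of a \emph{large} set $S$ enjoying this separation. I would obtain it by the probabilistic deletion method. Draw $N_0$ points uniformly and independently in $\{-1,1\}^n$; for two independent points $\sigma\cdot\sigma'=\sum_i\sigma_i\sigma'_i$ is a sum of $n$ i.i.d.\ Rademacher variables, so Hoeffding's inequality gives $\p\big(|\sigma\cdot\sigma'|>Cn^{2/3}\big)\leq 2e^{-C^2n^{1/3}/2}$. The expected number of pairs violating the separation is thus at most $N_0^2\,e^{-C^2n^{1/3}/2}$; choosing $N_0\asymp e^{cn^{1/3}}$ with $c$ small enough and deleting one endpoint from each bad pair leaves a set $S$ of cardinality $N\asymp e^{cn^{1/3}}$ with $|\sigma\cdot\sigma'|\leq Cn^{2/3}$ for all distinct $\sigma,\sigma'\in S$. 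The point to retain is that $\log N\gtrsim n^{1/3}$.

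Next I would verify the hypotheses of Theorem~\ref{THm} for the vector $\big(f(\sigma)\big)_{\sigma\in S}$ with threshold $r_0=Cn^{2/3}$ and the trivial covering $\C(r_0)=\{\{\sigma\}:\sigma\in S\}$ into singletons. Every off-diagonal entry satisfies $\Gamma_{\sigma\sigma'}\leq Cn^{2/3}=r_0$ whereas the diagonal entries equal $n\geq r_0$, so the covering condition and the bound $\sum_{D\in\C(r_0)}1_{\{I\in D\}}=1$ hold trivially. It then remains to control $\rho(r_0)=\max_\sigma\p(I=\sigma)$. Splitting $\p(I=\sigma)\leq\p\big(f(\sigma)\geq \E[M]/2\big)+\p\big(M\leq\E[M]/2\big)$ and using that $f(\sigma)\sim\mathcal N(0,n)$ together with the $\sqrt n$-Lipschitz Gaussian concentration of $M$, both terms are bounded by $e^{-\E[M]^2/(8n)}$. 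A Sudakov minoration then provides $\E[M]\geq c\sqrt n\,\sqrt{\log N}\gtrsim n^{2/3}$, since the canonical distances satisfy $\E\big[(f(\sigma)-f(\sigma'))^2\big]=2n-2\,\sigma\cdot\sigma'\geq n$ for distinct points. Hence $\rho(r_0)\leq 2e^{-cn^{1/3}}$, so $1/\log(1/\rho(r_0))\lesssim n^{-1/3}$ and
$$
K_{r_0}=\max\Big(Cn^{2/3},\,\tfrac{1}{\log(1/\rho(r_0))}\Big)\asymp n^{2/3}.
$$
Theorem~\ref{THm} then delivers the announced tail inequality with this scale $K_{r_0}\asymp n^{2/3}$.

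Beyond the combinatorial construction, the only point requiring care is the variance normalisation, since here the process has variance $n$ rather than $1$. One may either proceed directly as above with Theorem~\ref{THm}, taking care to use the Sudakov bound and the Gaussian tail with variance $n$, or first rescale to $g=f/\sqrt n$, which has unit variance and covariances at most $Cn^{-1/3}$, and apply Proposition~\ref{Ic} with $\epsilon=Cn^{-1/3}$ and $K_N=\max(Cn^{-1/3},1/\log N)\asymp n^{-1/3}$; undoing the scaling $f=\sqrt n\,g$ leads to the same conclusion.
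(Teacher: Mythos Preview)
Your proposal is correct and follows essentially the same route as the paper: the paper simply cites Chatterjee for the existence of a set $S\subset\{-1,1\}^n$ with $|\sigma\cdot\sigma'|\leq n^{2/3}$, notes that $\mathrm{Cov}\big(f(\sigma),f(\sigma')\big)=\sigma\cdot\sigma'$, and applies Proposition~\ref{Ic} to $f_{|S}$ with $\epsilon=n^{2/3}$. Your version is more careful in two respects---you supply the probabilistic construction of $S$ (together with the key size estimate $\log N\gtrsim n^{1/3}$) and you explicitly handle the variance normalisation (variance $n$ rather than $1$), a point the paper glosses over when invoking Proposition~\ref{Ic}.
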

\begin{proof}
Chatterjee proved in \cite{Chatt1} the existence of $S\in\{-1,1\}^n$ on which
${|\sigma\cdot\sigma'|\leq n^{2/3}}$. 
Since, $\mathrm{Cov}\big(f(\sigma),f(\sigma)\big)=\sigma\cdot\sigma'$, it suffices to apply
Proposition~\ref{Ic} to $f_{|S}$ and $\epsilon=n^{2/3}$.
\end{proof}

\section{Statistical testing}

 In this last section, we illustrate the preceding results on a statistical testing problem
analyzed in \cite{ABDL}. There, the authors
used standard Gaussian concentration to obtain an acceptance region for their test. Using some of the material developed here, the conclusion may be reinforced
in some instances in the form of a superconcentration bound.

\medskip
Following the framework and notation of \cite {ABDL}, we
observe an $n$ dimensional Gaussian vector $X=(X_1,\ldots,X_n)$ and raise the
following hypothesis problem:
\begin{itemize}
\item Under the null hypothesis $H_0$, the components of $X$ are
independent identically distributed standard normal random variables. Denote
then by $\p_0$ and $\E_0$ respectively the
underlying probability measure and expectation under $H_0$.\\
\item To describe the alternative hypothesis $H_1$, consider a class
$\C=\{S_1,\ldots,S_N\} $ of $N$ sets of indices such that $S_k\subset\{1,\ldots,n\}$ for all $k=1,\ldots, N$. Under $H_1$, there exists an $S\in\mathcal{C}$ such that
$$X_i\,\,\rm{has}\,\, \rm{distribution}
\left\{
\begin{array}{ll}
\mathcal{N}(0,1)\quad \mathrm{if}\, i\in S^c,\\
\mathcal{N}(\mu,1)\quad \mathrm{if}\, i\in S,
\end{array}
\right.
$$
where $\mu>0$ is a positive parameter. The components of $X$ are independent under $H_1$ as well. For any $S\in\C$, denote
then by $\p_S$ and $\E_S$ respectively the
underlying probability measure and expectation under $H_1$. We will also assume that every $S\in\C$ has the same cardinality $|S|=K$.
\end{itemize}

We recall that a test is a binary-valued function $f\,:\,\R^n\to\{0,1\}$. If $f(X)=0$, the
test accepts the null hypothesis, otherwise $H_0$ is rejected.  As in \cite{ABDL}, consider the risk of a test $f$ measured by 
$$R(f)=\p_0\big(f(X)=1)\big)+\frac{1}{N}\sum_{S\in\C}\p_S\big(f(X)=0)\big).$$
The authors of \cite{ABDL} are interested in determining, or at least estimating, the value of $\mu$ under which the risk can be made small. Among others results, they used a test based on maxima, called the scan-test,
 for which they showed that
$$f(X)=1 \, \iff \,   2\, \max_{S\in\C}X_S\geq 
    \mu K+\E_0\Big[\underset{S\in\C}{\max}X_S\Big] $$
where $X_S=\sum_{i\in S}X_i$ for some $S\in\C$.
They obtain the following result.
\begin{prop}\label{Lug}
The risk of the maximum test $f$ satisfies $R(f)\leq \delta$ whenever
$$\mu\geq\frac{1}{K} \, \E_0\Big[\underset{S\in\C}{\max}X_S\Big]
  +2\sqrt{\frac{2}{K}\log\frac{2}{\delta}}.$$
\end{prop}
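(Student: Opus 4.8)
The plan is to bound the two terms in the risk $R(f)$ separately, using the structure of the scan-test. Recall that $f(X)=1$ exactly when $2\max_{S\in\C}X_S\geq \mu K+\E_0[\max_{S\in\C}X_S]$. Writing $W=\max_{S\in\C}X_S$, the rejection region is $\{2W\geq \mu K+\E_0[W]\}$, equivalently $\{W-\E_0[W]\geq (\mu K-\E_0[W])/2\}$.

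For the type I error $\p_0(f(X)=1)$, I would first observe that under $H_0$ each $X_S=\sum_{i\in S}X_i$ is a centered Gaussian with variance $|S|=K$, so $W/\sqrt{K}$ is a maximum of standard normals. The key point is that $W$ concentrates around its mean $\E_0[W]$. Under $H_0$ one controls $\p_0(W-\E_0[W]\geq s)$ by the standard Gaussian concentration bound \eqref{TBSI}: since each $X_S$ has variance $K$, the Lipschitz constant of $W$ (as a function of the underlying independent standard normals) is $\sqrt K$, giving $\p_0(W-\E_0[W]\geq s)\leq e^{-s^2/(2K)}$. Setting $s=(\mu K-\E_0[W])/2$, the hypothesis $\mu\geq \frac1K\E_0[W]+2\sqrt{\frac{2}{K}\log\frac2\delta}$ guarantees $s\geq \sqrt{2K\log(2/\delta)}$, whence $\p_0(f(X)=1)\leq e^{-s^2/(2K)}\leq \delta/2$.

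For the type II error, I would bound $\frac1N\sum_{S\in\C}\p_S(f(X)=0)$ by controlling a single generic term $\p_S(f(X)=0)=\p_S(2W<\mu K+\E_0[W])$. Under $H_1$ with true set $S$, the coordinate $X_S$ has mean $\mu K$ (the $K$ shifted coordinates each contribute $\mu$), so $W\geq X_S$ has a mean boosted by roughly $\mu K$ relative to the null. The plan is to write $W\geq X_S$ and note $X_S-\mu K$ is centered Gaussian with variance $K$ under $\p_S$; then $f(X)=0$ forces $X_S<(\mu K+\E_0[W])/2$, i.e. $X_S-\mu K<(\E_0[W]-\mu K)/2=-s$, a deviation of $X_S$ below its $H_1$-mean by $s$. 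Since $\E_0[W]$ is unchanged in distribution on the non-$S$ coordinates, one checks that under $\p_S$ the mean of $W$ still exceeds $\E_0[W]$, so the one-sided Gaussian tail gives $\p_S(X_S-\mu K\leq -s)\leq e^{-s^2/(2K)}\leq\delta/2$ for every $S$, and averaging preserves the bound.

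The main obstacle will be the type II analysis: one must verify carefully that under $\p_S$ the centering $\E_0[W]$ is compatible with the shifted distribution, i.e. that the comparison $W\geq X_S$ together with the symmetric threshold $(\mu K+\E_0[W])/2$ really reduces to a single one-sided Gaussian deviation of magnitude $s$ for $X_S$, rather than requiring control of the full maximum $W$ under the alternative. Once both tail bounds are each at most $\delta/2$, adding them yields $R(f)\leq\delta$, completing the proof.
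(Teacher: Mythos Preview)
Your argument is correct. Note first that the paper does not actually prove Proposition~\ref{Lug}; it is quoted from \cite{ABDL}, and the paper only supplies a proof for the companion Proposition~\ref{stat}. Your type~I bound is exactly the standard one: $W=\max_{S\in\C}X_S$ is $\sqrt{K}$-Lipschitz in the underlying standard normals, so Gaussian concentration gives $\p_0(W-\E_0[W]\geq s)\leq e^{-s^2/(2K)}$, and the threshold $s=(\mu K-\E_0[W])/2$ matches the hypothesis.

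For the type~II term your route is slightly more elementary than the one the paper uses in the proof of Proposition~\ref{stat}. There the paper bounds $\p_S(W\leq \mu K-t)\leq \p_S(W\leq \E_S[W]-t)$ via $\mu K=\E_S[X_S]\leq\E_S[W]$ and then applies concentration of the maximum under $\p_S$. You instead use only the trivial inequality $W\geq X_S$ and reduce to a one-sided tail of the single Gaussian $X_S\sim\mathcal N(\mu K,K)$. Both yield $e^{-s^2/(2K)}$, so nothing is lost; your version has the advantage that no concentration of the full maximum is needed under the alternative, which makes the ``main obstacle'' you flag a non-issue: the inclusion $\{f(X)=0\}\subset\{X_S-\mu K<-s\}$ is immediate from $W\geq X_S$.
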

Together with \eqref{Indc}, this bound may be improved
in accordance with the correct magnitude of the maximum of a standard Gaussian vector.
\begin{prop}\label{stat}
For any $n\geq 2$, the risk of the maximum test $f$ satisfies $R(f)\leq \delta$ whenever
$$\mu\geq\frac{1}{K} \, \E_0\Big[\underset{S\in\C}{\max}X_S\Big]
+\log\frac{6}{\delta}\times\frac{2}{c\sqrt{K\log N}} $$
where $c>0$ is a universal constant.
\end{prop}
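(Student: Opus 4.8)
The plan is to bound the two contributions to the risk separately and to feed each of them into the superconcentration estimate \eqref{Indc}, applied to the maximum $M=\max_{S\in\C}X_S$ of the $N$ scan statistics $X_S=\sum_{i\in S}X_i$. Writing $b=\E_0[M]$ and $t=\tfrac12(\mu K-b)$, I would first record that, up to the universal constant $c$, the hypothesis on $\mu$ is exactly the demand $t\ge\tfrac{\sqrt K}{c\sqrt{\log N}}\log\tfrac6\delta$, so that $t>0$. Since the threshold of the test rewrites as $f(X)=1\iff M-b\ge t$, both error probabilities become deviation probabilities for $M$ around a suitable mean.

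For the type~I error I would argue under $H_0$, where $X_S/\sqrt K$ is standard Gaussian, so that $M/\sqrt K$ is a maximum of $N$ standard normal variables with mean $b/\sqrt K$. As $\{f(X)=1\}=\{M/\sqrt K-b/\sqrt K\ge t/\sqrt K\}$, inequality \eqref{Indc} with $n=N$ and deviation $t/\sqrt K$ gives
\[
\p_0\big(f(X)=1\big)\le 6\,e^{-ct\sqrt{\log N}/\sqrt K}.
\]
(If the sets of $\C$ overlap, so that the $X_S/\sqrt K$ are merely $\epsilon$-correlated with $\epsilon=\max_{S\ne S'}|S\cap S'|/K$, I would instead invoke Proposition~\ref{Ic}, whose bound has the same shape.)

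The hard part will be the type~II error, because under $H_1$ the observation is no longer centered. Fixing the true set $S$ and writing $X=\mu\ind_S+W$ with $W$ standard Gaussian, I would use $M\ge X_S$ together with $\E_S[X_S]=\mu K$ to get $\E_S[M]\ge\mu K$; since $\tfrac12(\mu K+b)=\mu K-t$ this yields the inclusion
\[
\{f(X)=0\}=\{M<\mu K-t\}\subseteq\{M-\E_S[M]<-t\}.
\]
The remaining and genuinely delicate point is to certify that \eqref{Indc} still governs the lower deviations of $M=\max_{S'}(\mu|S'\cap S|+W_{S'})$ about its mean. Here I would observe that the deterministic shifts $\mu|S'\cap S|$ affect the maximum of affine forms only through the location of the argmax and leave the covariance matrix of the Gaussian noise $W$ untouched; the hypercontractive argument of Theorem~\ref{THm}, and hence \eqref{Indc} after normalisation by $\sqrt K$, therefore applies verbatim to the centered variable $M-\E_S[M]$. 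This produces $\p_S(f(X)=0)\le 6\,e^{-ct\sqrt{\log N}/\sqrt K}$ uniformly in $S$, and the same bound for the averaged type~II error.

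Combining the two estimates would give $R(f)\le 12\,e^{-ct\sqrt{\log N}/\sqrt K}$, and the threshold $t\ge\tfrac{\sqrt K}{c\sqrt{\log N}}\log\tfrac6\delta$ then forces $R(f)\le\delta$, the numerical factor from summing the two error terms being absorbed into the universal constant $c$. I expect the main obstacle to be exactly the type~II step: one must simultaneously reduce $\{f=0\}$ to an honest lower-deviation event through the mean bound $\E_S[M]\ge\mu K$ and justify that superconcentration is unaffected by the non-centered perturbation of $H_1$, whereas the type~I step is a mere normalisation of \eqref{Indc}.
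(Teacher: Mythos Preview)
Your proposal follows the paper's proof essentially line for line: both set $t=\tfrac12(\mu K-\E_0[M])$, bound the type~I error via the superconcentration inequality \eqref{Indc} for $M$ under $\p_0$, reduce the type~II error to a lower deviation of $M$ about $\E_S[M]$ using $\E_S[M]\ge\mu K$, invoke the same concentration under $\p_S$, and then sum the two contributions (the paper uses one-sided constants $3+3=6$ where you use $6+6=12$ and absorb the difference into $c$). Your write-up is in fact slightly more explicit than the paper's in two places---flagging the possible overlap of the sets in $\C$ (where Proposition~\ref{Ic} stands in for \eqref{Indc}) and articulating why the deterministic shifts $\mu|S'\cap S|$ under $H_1$ leave the covariance, and hence the hypercontractive machinery of Theorem~\ref{THm}, unchanged.
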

\begin{proof}
We follow the proof of \cite{ABDL} to get that both, for every $t\geq 0$,
$$
\p_0\Big(\underset{S\in\C}{\max} X_S\geq \E_0\Big[\max_{S\in\C} X_S\Big]+t\Big)\leq 3e^{-ct\sqrt{\log (N)/K}}
$$
and, for any $S\in\C$,
\begin{eqnarray*}
\p_S\Big(\max_{S'\in\C} X_{S'}\leq\mu K-t\Big)&\leq&\p_{S}\Big(\max_{S'\in\C} X_{S'}\leq\E_{S'}\Big[\max_{S'\in\C}X_{S'}\Big]-t\Big)\\
&\leq& 3e^{-ct\sqrt{\log (N)/K}}
\end{eqnarray*}
since, under $H_1$, for a fixed $S\in\C$, $\mu K=\E_S\left[X_S\right]\leq\E_S\left[\max_{S'\in\C}X_{S'}\right]$ and $X_S$ has the same law as any $X_{S'}$. Set then $t$ such that
$$
2t = \mu K- \E\Big[\max_{S\in\C}X_S\Big]
$$
 which is positive according to the hypothesis. With the previous inequalities, we obtain a bound on the risk 
$$R(f)\leq 6e^{-ct\sqrt{\log(N)/K}}$$
 which may then be turned into $R(f)\leq \delta$ as in the statement.
\end{proof}
The new bound of Proposition~\ref {stat} is as good as 
the one of Proposition~\ref{Lug} when $\delta\simeq 1/N^{\alpha}$
for some $\alpha>0$. It is better than Proposition~\ref{Lug} when $\delta\simeq1/\log^\alpha(N)$ with
$\alpha>0$.  Nevertheless, Proposition \ref{Lug} is better than Proposition \ref{stat} when $\delta\simeq e^{-N^\alpha}$, for $\alpha>0$. However, it might not be relevant to consider such small risk.\\
\newline
\textit{Aknowledgment. I thank my Ph.D advisor M. Ledoux for introducing this problem to me and for fruitful discussions.}

\bigskip
\bibliography{Biblioa1.bib}
\bibliographystyle{plain}

\end{document}